\newtheorem{te}{Theorem }
 \newtheorem{lm}{Lemma}
\begin{document}
\noindent

 \title{ The Poincar\'e series of the joint invariants and covariants of the two binary forms}

\author{L. Bedratyuk}\address{Khmelnitskiy national university, Insituts'ka, 11,  Khmelnitskiy, 29016, Ukraine}

\begin{abstract} Let $\mathcal{I}_{d_1,d_2}$ and $\mathcal{C}_{d_1,d_2}$ be  the algebras  of joint invariants and joint covariants of the two binary forms of degrees $d_1$ and $d_2.$  Formulas for computation of the Poincar\'e series $\mathcal{PI}_{d_1,d_2}(z),$ $ \mathcal{PC}_{d_1,d_2}(z)$  of the algebras  is found. 
By using these formulas, we have computed the series  for  $d_1,d_2 \leq 20.$
\end{abstract}
\maketitle

\noindent
{\bf 1.} Let   $V_{d_1}, V_{d_2}$ be the complex vector spaces of the binary forms of degrees   $d_1$ and   $d_2$ endowed with the natural action of the complex group $SL_2.$ Consider the induced action of the group  on  the algebras of the polynomial functions  $\mathcal{O}(V_{d_1}\oplus V_{d_2})$  and  $\mathcal{O}(V_{d_1}\oplus V_{d_2}\oplus \mathbb{C}^2).$ The algebras   $$\mathcal{I}_{d_1,d_2}:=\mathcal{O}(V_{d_1}\oplus V_{d_2})^{SL_2} \text{  and  }  \mathcal{C}_{d_1,d_2}:=\mathcal{O}(V_{d_1}\oplus V_{d_2}\oplus \mathbb{C}^2)^{SL_2}$$   is called the algebra of joint invariants and the algebra of joint covariants for the binary forms. 
The reductivity of  $SL_2$  implies that the algebras  $\mathcal{I}_{d_1,d_2},$  $\mathcal{C}_{d_1,d_2}$  are  finite generated graded algebras
$$
\begin{array}{l}
\mathcal{I}_{d_1,d_2}=(\mathcal{I}_{d_1,d_2})_0+(\mathcal{I}_{d_1,d_2})_1+\cdots+(\mathcal{I}_{d_1,d_2})_i+ \cdots,\\
\\
\mathcal{C}_{d_1,d_2}=(\mathcal{C}_{d_1,d_2})_0+(\mathcal{C}_{d_1,d_2})_1+\cdots+(\mathcal{C}_{d_1,d_2})_i+ \cdots,
\end{array}
$$
and the vector spaces  $(\mathcal{I}_{d_1,d_2})_i,$ $(\mathcal{C}_{d_1,d_2})_i$ are the finite dimensional. The formal power series  $\mathcal{PI}_{d_1,d_2}(z), \mathcal{CI}_{d_1,d_2}(z)  \in \mathbb{Z}[[z]],$
$$
\mathcal{PI}_{d_1,d_2}(z)=\sum_{i=0}^{\infty }\dim((\mathcal{I}_{d_1,d_2})_i) z^i,  \mathcal{CI}_{d_1,d_2}(z) =\sum_{i=0}^{\infty }\dim((\mathcal{C}_{d_1,d_2})_i) z^i,
$$ 
 is called the Poincar\'e series   of the algebras of   joint invariants and covariants.   The finite generation of the algebra of   covariants implies  that its Poincar\'e series is an expansion  of some  rational function.  We consider here the problem of
computing efficiently this rational function.

  The  Poincar\'e series  calculations were an  important  object of research  in  classical invariant theory of the 19th century.
For  the cases  $d\leq 10,$ $d=12$ the  Poincar\'e of the algebra of  invariants of binary form of degree $d$  were calculated by Sylvester and  Franklin, see  \cite{SF}, \cite{Sylv-12}.
Relatively recently, Springer \cite{SP} set the explicit formula for computing the Poincar\'e  series of the algebras of invariants of the binary $d$-forms. This formula has been used by Brouwer and Cohen  \cite{BC} for  $d\leq 17$ and also by Littelmann and Procesi  \cite{LP} for even  $d\leq 36.$
In  \cite{Dr_G} the Poincar\'e series of algebras of   joint invariants and  covariants of two and three binary form of small degrees  is  calculated.

 In the paper we  have  found  Sylvester-Cayley-type formulas  for calculating of $(\mathcal{I}_{d_1,d_2})_i,$ $(\mathcal{C}_{d_1,d_2})_i$    and Springer-type formulas  for calculation of $\mathcal{PI}_{d_1,d_2}(z),$ $ \mathcal{PI}_{d_1,d_2}(z).$ By using the  derived formulas, the Poincar\'e series  is calculated  for $d_1,d_2 \leq 20.$

%%%%%%%%%%%%%%%%%%%%%%%%%%%%%%%%%%%%%%%%%%%%55

{\bf 2.} To begin with, we give a proof of the Sylvester-Cayley-type  formula for joint invariants and covariants of two    binary form.
%===========================================================================
Let  $V\cong \mathbb{C}^2$ be standard  two-dimensional representation of Lie algebra  $\mathfrak{sl_{2}}.$ The irreducible representation   $V_d=\langle v_0,v_1,...,v_d \rangle,$ $\dim V_d=d+1$ of the  algebra $\mathfrak{sl_{2}}$ is the symmetric $d$-power of the standard representation  $V=V_1,$  i.e. $V_d=S^d(V),$  $V_0 \cong  \mathbb{C}.$  
The basis elements    $ \left( \begin{array}{ll}  0\, 1 \\ 0\,0 \end{array} \right),$ $ \left( \begin{array}{ll}  0\, 0 \\ 1\,0 \end{array} \right)$, $ \left( \begin{array}{ll}  1 &  \phantom{-}0 \\  0 &-1 \end{array} \right)$ of the algebra    $\mathfrak{sl_{2}}$ act on    $V_d$  by the derivations  $D_1, D_2, E$ : 
$$
D_1(v_i)=i\, v_{i-1},  D_2(v_i)=(d-i)\,v_{i+1}, E(v_i)=(d-2\,i)\,v_i.
$$
Let us  consider the two irreducible
 $\mathfrak{sl_{2}}$-modules  $V_{d_1}$ і $V_{d_2}.$ Identity the algebras  of polynomial functions  $\mathcal{O}(V_{d_1}),$  $\mathcal{O}(V_{d_2})$ with the symmetrical algebras  $S(V_{d_1}),$ $S(V_{d_2}).$The action of   $\mathfrak{sl_{2}}$  is extended to action on the symmetrical algebra  $S(V_{d_1}\oplus V_{d_2}).$ in the natural way.
The algebra   $\mathcal{I}_{d_1,d_2},$
$$
\mathcal{I}_{d_1,d_2}= \displaystyle{ S(V_{d_1}\oplus V_{d_2})^{\mathfrak{sl_{2}}}}=\{ v \in S(V_{d_1}\oplus V_{d_2})|  D_1(v)=0,  D_2(v) =0 \},
$$
is called the algebra of   joint invariants of two binary forms of degrees   $d_1, d_2.$

Let   $\mathfrak{u}_{2}$ be  the maximal unipotent subalgebra of $\mathfrak{sl}_{2}.$ The algebra  $\mathcal{S}_{d_1,d_2},$
$$
\mathcal{S}_{d_1,d_2}:= \displaystyle{ S(V_{d_1}\oplus V_{d_2})^{\mathfrak{u_{2}}}}=\{ v \in S(V_{d_1}\oplus V_{d_2})|  D_1(v)=0 \},
$$
is called the algebra of   joint semi-invariants of the binary forms of the degrees   $d_1, d_2.$ For any element $v \in \mathcal{S}_{d_1,d_2}$ a natural number $s$ is called the  order of the element $v$ if the number $s$ is the smallest natural number such that \begin{equation*}D_2^s(v) \ne 0, D_2^{s+1}(v) = 0.\end{equation*}
It is  clear that any semi-invariant   of  order $i$ is the highest weight vector  for an  irreducible $\mathfrak{sl_{2}}$-module   of the dimension $i+1$ in $S(V_{d_1}\oplus V_{d_2}).$
 
The application of the Grosshans principle, see  \cite{Gross}, \cite{Pom}  gives
 $$
\mathcal{S}_{d_1,d_2}:= \displaystyle{ S(V_{d_1}\oplus V_{d_2})^{\mathfrak{u_{2}}}}\cong  \mathcal{O}(V_{d_1}\oplus V_{d_2} \oplus \mathbb{C}^2)^{\mathfrak{sl_{2}}}=\mathcal{C}_{d_1,d_2}.
$$
Thus,  the algebra joint covariants is isomorphic to the algebra of  joint semi-invariants. Therefore, it is  enough to compute the Poincar\'e  series of the algebra $\mathcal{S}_{d_1,d_2}.$

The algebra    $S(V_{d_1}\oplus V_{d_2})$ is graded:  
$$
S(V_{d_1}\oplus V_{d_2})=S^0(V_{d_1}\oplus V_{d_2})+S^1(V_{d_1}\oplus V_{d_2})+\cdots +S^n(V_{d_1}\oplus V_{d_2})+\cdots,
$$
and each   $S^n(V_{d_1}\oplus V_{d_2})$ is the complete reducibly
 representation of the Lie algebra  $\mathfrak{sl_{2}}.$

Thus, the following decomposition  holds
$$
S^n(V_{d_1}\oplus V_{d_2}) \cong \gamma_n(d_1,d_2;0) V_0+\gamma_n(d_1,d_2;1) V_1+ \cdots +\gamma_n(d_1,d_2;n\cdot \max(d_1,d_2)) V_{n\cdot \max(d_1,d_2)},  \eqno{(1)}
$$
here  $\gamma_n(d_1,d_2;k)$ is  the  multiplicity of the representation  $V_k$  in the decomposition of  $S^n(V_{d_1}\oplus V_{d_2}).$ On the other hand, the multiplicity  $\gamma_n(d_1,d_2;i)$  of the  representation  $V_i$ is  equal to the number of linearly independent homogeneous joint semi-invariants of  the degree $n$   and the order $i.$
In particular, the number of linearly  independent joint invariants of degree  $n$ is  equal to  $\gamma_n(d_1,d_2;0).$  This argument proves 
\begin{lm}
$$
\begin{array}{ll}
(i) & \dim (\mathcal{I}_{d_1,d_2})_n=\gamma_n(d_1,d_2;0),\\
(ii) & \dim (S_{d_1,d_2})_n=\gamma_n(d_1,d_2;0)+\gamma_n(d_1,d_2;1) + \cdots +\gamma_n(d_1,d_2;n\,\max(d_1,d_2)).
\end{array}
$$
\end{lm}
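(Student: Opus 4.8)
The plan is to work one graded component at a time and then reduce the entire statement to an elementary computation inside a single irreducible $\mathfrak{sl}_2$-module. Since the derivations $D_1, D_2, E$ are homogeneous of degree $0$ with respect to the grading by $n$, each subspace $S^n := S^n(V_{d_1}\oplus V_{d_2})$ is $\mathfrak{sl}_2$-stable, and passing to invariants or to $\mathfrak{u}_2$-invariants commutes with the grading. Hence $(\mathcal{I}_{d_1,d_2})_n = (S^n)^{\mathfrak{sl}_2}$ and $(\mathcal{S}_{d_1,d_2})_n = (S^n)^{\mathfrak{u}_2} = \ker(D_1|_{S^n})$. Invoking the decomposition (1), it then suffices to compute the dimension of the invariants, respectively of the semi-invariants, inside each irreducible summand $V_k$ and to sum over all summands counted with multiplicity $\gamma_n(d_1,d_2;k)$.

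The key local fact I would establish is the following description of the relevant subspaces of a single $V_k=\langle v_0,\dots,v_k\rangle$. Using $D_1(v_i)=i\,v_{i-1}$, a vector $v=\sum_i c_i v_i$ satisfies $D_1(v)=\sum_{j\ge 0}(j+1)\,c_{j+1}\,v_j=0$ if and only if $c_i=0$ for all $i\ge 1$; thus $\ker(D_1|_{V_k})=\langle v_0\rangle$ is one-dimensional for every $k\ge 0$, which is just the statement that an irreducible module has a one-dimensional highest-weight space. If in addition $D_2(v)=0$, then $D_2(v_0)=k\,v_1=0$ forces $k=0$. Consequently $V_k^{\mathfrak{sl}_2}=\ker(D_1|_{V_k})\cap\ker(D_2|_{V_k})$ equals $V_0$ when $k=0$ and is zero for every $k\ge 1$.

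Granting this, part $(i)$ follows at once: only the trivial summands $V_0$ contribute an invariant, so $\dim(\mathcal{I}_{d_1,d_2})_n=\dim(S^n)^{\mathfrak{sl}_2}=\gamma_n(d_1,d_2;0)$. For part $(ii)$, every irreducible summand contributes exactly one dimension to $\ker D_1$, whence
$$
\dim(\mathcal{S}_{d_1,d_2})_n=\dim\ker(D_1|_{S^n})=\sum_{k=0}^{n\,\max(d_1,d_2)}\gamma_n(d_1,d_2;k),
$$
which is the asserted formula; the upper limit $n\,\max(d_1,d_2)$ appears because that is the largest weight occurring in $S^n(V_{d_1}\oplus V_{d_2})$, matching the range in (1).

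The computation is genuinely elementary, so there is no hard analytic obstacle; the one point deserving care is the bookkeeping in passing from the abstract isomorphism (1) to assertions about honest subspaces of $S^n$. Specifically, I would make explicit that both $\ker D_1$ and $\ker D_1\cap\ker D_2$ are additive over a direct-sum decomposition into $\mathfrak{sl}_2$-submodules, so that the summand-by-summand count is legitimate; this is immediate since $D_1$ and $D_2$ preserve the decomposition. Finally, the identification $\mathcal{S}_{d_1,d_2}\cong\mathcal{C}_{d_1,d_2}$ recorded earlier via the Grosshans principle transfers the formula in $(ii)$ to the algebra of joint covariants.
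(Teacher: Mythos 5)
Your proposal is correct and follows essentially the same route as the paper: both arguments count, via the decomposition (1), one highest weight vector (semi-invariant) per irreducible summand $V_k$ and observe that invariants occur only in the trivial summands $V_0$. You merely make explicit the elementary kernel computations $\ker(D_1|_{V_k})=\langle v_0\rangle$ and $D_2(v_0)=k\,v_1$, together with the compatibility of the grading, which the paper asserts as clear.
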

Let us recall some points of the representation theory of the Lie algebra  $\mathfrak{sl_{2}}.$

The set of weights ( eigenvalues of the operator $E$) of a representation  $W$ denote by  $\Lambda_{W},$  in particular, $\Lambda_{V_d}=\{-d, -d+2, \ldots, d \}.$ The set of weights of a representation  $W$ denote by  $\Lambda_{W},$  in particular,  $\Lambda_{V_d}=\{-d, -d+2, \ldots, d \}.$ 
It is clear that any joint semi-invariant   $v \in \mathcal{S}_{d_1,d_2}$  of degree  $i$ is  the highest weight vector  for an  irreducible representation   $V_i$ in the symmetrical algebra   $S(V_{d_1}\oplus V_{d_2}).$
 A  formal sum 
$$
{\rm Char}(W)=\sum_{k \in \Lambda_{W}} n_W(k) q^k,
$$
is called the character   of a representation  $W,$  
here   $n_W(k)$ denotes the   multiplicity  of the weight $k \in \Lambda_{W}.$
Since, a   multiplicity of any weight of the irreducible representation $V_d$  is  equal to 1, we have  
$$
{\rm Char}(V_d)=q^{-d}+q^{-d+2}+\cdots+q^{d}.
$$

On the other hand, the characted  $ {\rm Char}(S^n(V_{d_1}\oplus V_{d_2})$ of the representation  $S^n(V_{d_1}\oplus V_{d_2})$  equals   $$H_n(q^{-d_1},q^{-d_1+2},\ldots,q^{d_1},q^{-d_2},q^{-d_2+2},\ldots,q^{d_2}),$$ see  \cite{FH},  where  $H_n(x_0,x_1,\ldots,x_{d_1},y_0,y_1,\ldots,y_{d_2})$ is  the complete symmetrical function     $$H_n(x_0,x_1,\ldots,x_{d_1},y_0,y_1,\ldots,y_{d_2})=\sum_{|\alpha|+|\beta|=n} x_0^{\alpha_0} x_1^{\alpha_1} \ldots x_{d_1}^{\alpha_{d_1}}y_0^{\beta_0} y_1^{\beta_1} \ldots y_{d_1}^{\beta_{d_2}} , |\alpha|=\sum_i \alpha_i.$$

By replacing   $x_i=q^{d_1-2\,i},$ $i=0,\ldots, d_1,$ $y_i=q^{d_2-2\,i},$ $i=0,\ldots, d_2,$  we  obtain the specialized expression for the character of  ${\rm Char}(S^n(V_{d_1}\oplus V_{d_2})):$ 
$$
{\rm Char}(S^n(V_{d_1}\oplus V_{d_2}))= \sum_{|\alpha|+|\beta|=n} (q^{d_1})^{\alpha_0} (q^{d_1-2\cdot 1})^{\alpha_1} \ldots (q^{-d_1})^{\alpha_{d_1}}  (q^{d_2})^{\beta_0} (q^{d_2-2\cdot 1})^{\beta_1} \ldots (q^{-d_2})^{\beta_{d_2}} =
$$
$$
= \sum_{|\alpha|+|\beta|=n} p^{d_1|\alpha|+d_2|\beta| +(\alpha_1+2\alpha_2+\cdots + d_1\, \alpha_{d_1})+(\beta_1+2\beta_2+\cdots + d_2\, \beta_{d_2})}=\sum_{i=0}^{n\,\max(d_1,d_2)} \omega_n(d_1,d_2;i) p^{i},
$$
here   $\omega_n(d_1,d_2;i)$  is the number nonnegative integer solutions of the following  system of equations:
$$
\left \{
\begin{array}{l}
d_1|\alpha|+d_2|\beta| -(\alpha_1+2\alpha_2+\cdots + d_1\, \alpha_{d_1})-(\beta_1+2\beta_2+\cdots + d_2\, \beta_{d_2})=i \\
\\
|\alpha|+|\beta| =n.
\end{array}
\right. 
 \eqno{(2)}
$$

We can summarize what we have shown so far in  
\begin{te} 
$$
\begin{array}{ll}
(i) & \dim (\mathcal{I}_{d_1,d_2})_n=\omega_n(d_1,d_2;0)-\omega_n(d_1,d_2;2),\\
&\\
(ii) & \dim (\mathcal{S}_{d_1,d_2})_n=\omega_n(d_1,d_2;0)+\omega_n(d_1,d_2;1).
\end{array}
$$

\end{te}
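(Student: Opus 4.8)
The plan is to connect the two bookkeeping quantities at play: the irreducible multiplicities $\gamma_n(d_1,d_2;k)$ appearing in the decomposition (1) and in the Lemma, and the weight multiplicities $\omega_n(d_1,d_2;i)$ extracted from the character. The bridge is an elementary observation about $\mathfrak{sl}_2$-characters: since ${\rm Char}(V_k)=q^{-k}+q^{-k+2}+\cdots+q^{k}$, a nonnegative weight $i$ occurs in $V_k$ with multiplicity $1$ precisely when $k\geq i$ and $k\equiv i\pmod 2$, and with multiplicity $0$ otherwise.

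First I would take characters of both sides of the decomposition (1). Comparing the coefficient of $q^i$ (for $i\geq 0$) and using the observation above yields the fundamental relation
$$
\omega_n(d_1,d_2;i)=\sum_{j\geq 0}\gamma_n(d_1,d_2;i+2j),
$$
i.e. the multiplicity of the weight $i$ is the sum of the multiplicities of all those irreducibles $V_k$ tall enough to reach it and of the correct parity. The self-duality of finite-dimensional $\mathfrak{sl}_2$-modules makes the character symmetric under $q\mapsto q^{-1}$, so restricting to $i\geq 0$ loses no information, and the top weight $n\,\max(d_1,d_2)$ bounds all occurring weights, making these finite sums legitimate.

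Part (i) then follows by telescoping: subtracting the relation for $i=2$ from that for $i=0$ leaves exactly $\gamma_n(d_1,d_2;0)$, which by Lemma~(i) equals $\dim(\mathcal{I}_{d_1,d_2})_n$. For part (ii) I would instead add the relation for $i=0$, which sums $\gamma_n$ over even $k$, to the relation for $i=1$, which sums $\gamma_n$ over odd $k$; the two sums together exhaust all $k\geq 0$, giving
$$
\omega_n(d_1,d_2;0)+\omega_n(d_1,d_2;1)=\sum_{k\geq 0}\gamma_n(d_1,d_2;k),
$$
which is $\dim(\mathcal{S}_{d_1,d_2})_n$ by Lemma~(ii).

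I do not anticipate a genuine obstacle once the weight-to-irreducible relation is established; the only point demanding care is the parity and range bookkeeping in the character comparison, ensuring that every irreducible contributing the weight $i$ is counted exactly once and that the splitting into even and odd $k$ in part (ii) is exhaustive and disjoint.
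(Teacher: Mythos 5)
Your proposal is correct and takes essentially the same route as the paper: both arguments rest on the weight-to-multiplicity relation $\omega_n(d_1,d_2;i)=\sum_{j\geq 0}\gamma_n(d_1,d_2;i+2j)$ (the paper states its instances for $i=0,1,2$), then subtract the $i=0$ and $i=2$ cases for part (i) and add the $i=0$ and $i=1$ cases for part (ii), concluding via Lemma 1. Your write-up is in fact slightly cleaner, since it states the relation in general and gets the parity bookkeeping right where the paper's prose has slips (e.g.\ the weight $2$ occurs in $V_k$ for even $k\geq 2$, not for odd $k$ as printed).
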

\begin{proof}
\noindent
 $(i)$
The zero weight appears  once  in any  representation $V_k,$  for even $k$,  therefore
$$
\omega_n(d_1,d_2;0)=\gamma_n(d_1,d_2;0)+\gamma_n(d_1,d_2;2)  +\gamma_n(d_1,d_2;4)+\ldots
$$
The weight   $2$ appears  once in any  representation   $V_k,$  for odd $k$,  therefore
$$
\omega_n(d_1,d_2;0)=\gamma_n(d_1,d_2;2)+\gamma_n(d_1,d_2;4)  +\gamma_n(d_1,d_2;6)+\ldots
$$
Taking into account Lemma  1,  obtain
$$
\gamma_n(d_1,d_2;0)= \dim (\mathcal{I}_{d_1,d_2})_n=\omega_n(d_1,d_2;0)-\omega_n(d_1,d_2;2).
$$

\noindent
$(ii)$
The  weight $1$ appears  once  in any  representation $V_k,$  for even $k$,  therefore
 $$
\omega_n(d_1,d_2;1)=\gamma_n(d_1,d_2;1)+\gamma_n(d_1,d_2;3)  +\gamma_n(d_1,d_2;5)+\ldots
$$
Thus,
$$
\begin{array}{l}
\displaystyle \omega_n(d_1,d_2;0)+\omega_n(d_1,d_2;1)=\\ \\
\displaystyle =\gamma_n(d_1,d_2;0)+\gamma_n(d_1,d_2;1)  +\gamma_n(d_1,d_2;2)+\ldots
+\gamma_n(d_1,d_2;n\,\max(d_1,d_2))=\\\\
\displaystyle =\dim (S_{d_1,d_2})_n.
\end{array}
$$
\end{proof}

\noindent
{\bf 3.} Simplify the system  $(2)$  to 
$$
\left \{
\begin{array}{l}
d_1\alpha_0+(d_1-2)\alpha_1+(d_1-4)\alpha_2+\cdots + (-d_1)\, \alpha_{d_1}+\\+d_2\beta_0+(d_2-2)\beta_1+(d_2-4)\beta_2+\cdots + (-d_2)\, \beta_{d_2}=i \\
\\
\alpha_0+\alpha_1+\cdots +\alpha_{d_1}+ \beta_0+\beta_1+\cdots +\beta_{d_2}=n.
\end{array}
\right. 
$$
 It well-known  that  the number
 $\omega_n(d_1,d_2;i)$ of   non-negative integer solutions of the above  system
is equal to the coefficient of  $\displaystyle t^n z^i $ of the  expansion   of the  series
$$
f_{d_1,d_2}(t,z)=\frac{1}{(1-t z^{d_1})(1-t\,z^{d_1-2})\ldots (1-t\,z^{-d_1})(1-t z^{d_2})(1-t\,z^{d_2-2})\ldots (1-t\,z^{-d_2})}.
$$
Denote it in such a way:  $\omega_n(d_1,d_2;i):=\left[  t^n z^i\right](f_{d_1,d_2}(t,z)).$  
The  following statement holds
\begin{te} 
$$
\begin{array}{ll}
(i) & \dim (I_{d_1,d_2})_n=[t^n ](1-z^2)f_{d_1,d_2}(t,z),\\
&\\
(ii) & \dim (\mathcal{S}_{d_1,d_2})_n=[t^n ](1+z)f_{d_1,d_2}(t,z).
\end{array}
$$
\end{te}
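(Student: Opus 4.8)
The plan is to read the bracket operator $[t^n](\cdots)$ as the coefficient of $t^n z^0$, i.e.\ the constant term in $z$ of the $t^n$-coefficient, which is the only reading making both sides of each identity a single number; with this convention the statement becomes a short bookkeeping computation resting on two facts. The first is the generating-function identity recorded just before the theorem, namely $[t^n z^i]\,f_{d_1,d_2}(t,z)=\omega_n(d_1,d_2;i)$, valid for every integer $i$ once we extend $\omega_n(d_1,d_2;i)$ by zero outside the range $0\le i\le n\max(d_1,d_2)$. The second is the symmetry $f_{d_1,d_2}(t,z)=f_{d_1,d_2}(t,z^{-1})$: as $k$ runs over $0,\dots,d_j$ the exponents $d_j-2k$ fill the set $\{-d_j,\dots,d_j\}$, which is invariant under negation, so the substitution $z\mapsto z^{-1}$ merely permutes the factors of $f_{d_1,d_2}$. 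Consequently each coefficient $[t^n]f_{d_1,d_2}$ is a symmetric Laurent polynomial in $z$, giving $\omega_n(d_1,d_2;i)=\omega_n(d_1,d_2;-i)$ for all $i$.

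For part $(i)$ I would expand $(1-z^2)f_{d_1,d_2}=f_{d_1,d_2}-z^2 f_{d_1,d_2}$ and extract the coefficient of $t^n z^0$. The first term contributes $[t^n z^0]f_{d_1,d_2}=\omega_n(d_1,d_2;0)$, while the second contributes $[t^n z^{-2}]f_{d_1,d_2}=\omega_n(d_1,d_2;-2)$, which by the symmetry equals $\omega_n(d_1,d_2;2)$. Hence $[t^n z^0]\big((1-z^2)f_{d_1,d_2}\big)=\omega_n(d_1,d_2;0)-\omega_n(d_1,d_2;2)$, and this is exactly $\dim(\mathcal{I}_{d_1,d_2})_n$ by Theorem $1(i)$.

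For part $(ii)$ the same manipulation applied to $(1+z)f_{d_1,d_2}=f_{d_1,d_2}+z\,f_{d_1,d_2}$ gives $[t^n z^0]\big((1+z)f_{d_1,d_2}\big)=\omega_n(d_1,d_2;0)+[t^n z^{-1}]f_{d_1,d_2}=\omega_n(d_1,d_2;0)+\omega_n(d_1,d_2;1)$, again using the symmetry to replace the $z^{-1}$-coefficient by the $z^{1}$-coefficient; by Theorem $1(ii)$ this equals $\dim(\mathcal{S}_{d_1,d_2})_n$. Both parts then follow at once.

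The routine algebra is trivial; the substantive points are the two preliminary observations. The symmetry $f_{d_1,d_2}(t,z)=f_{d_1,d_2}(t,z^{-1})$ (equivalently, invariance of the weight system of $V_{d_1}\oplus V_{d_2}$ under sign change) is what converts the ``negative'' coefficients $[z^{-2}]$ and $[z^{-1}]$ produced by the factors $-z^2$ and $z$ into the genuinely occurring multiplicities $\omega_n(d_1,d_2;2)$ and $\omega_n(d_1,d_2;1)$, so it is the real engine of the argument. I expect the only delicate point to be pinning down the meaning of $[t^n](\cdots)$ and making the constant-term-in-$z$ convention explicit, since without it the asserted identities equate a number with a Laurent polynomial; once that is fixed, nothing else is required.
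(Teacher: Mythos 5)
Your proof is correct and takes essentially the same route as the paper: both reduce the claim to Theorem~1 by extracting the coefficient of $t^nz^0$ from $(1-z^2)f_{d_1,d_2}$ and $(1+z)f_{d_1,d_2}$ via the shift property $[x^{i-k}]f=[x^i]\bigl(x^kf\bigr)$. The only difference is that you make explicit the two points the paper leaves tacit — the constant-term-in-$z$ reading of $[t^n]$ and the symmetry $f_{d_1,d_2}(t,z)=f_{d_1,d_2}(t,z^{-1})$ (equivalently $\omega_n(d_1,d_2;i)=\omega_n(d_1,d_2;-i)$), without which the paper's literal manipulation would yield the factors $1-z^{-2}$ and $1+z^{-1}$ instead of $1-z^2$ and $1+z$ — so your write-up is in fact a cleaner version of the same argument.
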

\begin{proof}
Taking into account the formal property $[x^{i-k}]f(x)=[x^{i}](x^k f(x)),$ we  get
$$
\begin{array}{l}
 \dim (I_{d_1,d_2})_n=\omega_n(d_1,d_2;0)-\omega_n(d_1,d_2;2)=[t^n]f_{d_1,d_2}(t,z)-[t^n\,z]f_{d_1,d_2}(t,z)=\\
\\
=[t^n]f_{d_1,d_2}(t,z)-[t^n]z f_{d_1,d_2}(t,z)=[t^n](1-z^2) f_{d_1,d_2}(t,z).
\end{array}
$$
In the same way
$$
\begin{array}{l}
 \dim (S_{d_1,d_2})_n=\omega_n(d_1,d_2;0)+\omega_n(d_1,d_2;1)=[t^n]f_{d_1,d_2}(t,z)-[t^n\,z]f_{d_1,d_2}(t,z)=\\
\\
=[t^n]f_{d_1,d_2}(t,z)+[t^n]z f_{d_1,d_2}(t,z)=[t^n](1+z) f_{d_1,d_2}(t,z).
\end{array}
$$
\end{proof}

%%%%%%%%%%%%%%%%%%%%%%%%%%%%%%%%%%%%%%%%%%%%%%%%%%%%%%
{\bf 4.} Let us prove  Springer-type  formula for the Poincar\'e  series  $\mathcal{PI}_{d_1,d_2}(z),$ $ \mathcal{PC}_{d_1,d_2}(z)$  of the  algebras  joint invariants and  covariants of  the two binary forms.
Consider the $\mathbb{C}$-algebra $\mathbb{C}[[t,z]]$   of  formal   power series.
For arbitrary   $m,n \in \mathbb{Z^+}$  define  $\mathbb{C}$-linear function
$$ \Psi_{m,n}:\mathbb{C}[[t,z]] \to \mathbb{C}[[z]],$$ $ m,n  \in \mathbb{Z}^{+} $ in the  following  way:
$$
\Psi_{m,n}\left(\sum_{i,j=0}^{\infty} a_{i,j}t^i z^j\right)=\sum_{i=0}^{\infty} a_{i m,i n} z^i.
$$
Define by $\varphi_n$  the restriction of 
$\Psi_{m,n}$ to  $\mathbb{C}[[z]],$ namely 

$$
\varphi_{n}\left(\sum_{i=0}^{\infty} a_{i}z^i \right)=\sum_{i=0}^{\infty} a_{i n} z^i. 
$$
There is  an effective algorithm of   calculation  for the function $ \varphi_n, $ see  
 \cite{B-PS}.
In  some  cases   calculation of the functions $ \Psi$  can  be reduced   to  calculation of  the functions    $\varphi$.  The following  statements hold:

\begin{lm} For  $R(z) \in \mathbb{C}[[z]]$    and  for  $ m, n, k  \in \mathbb{N}$    we have:

$$
\begin{array}{ll}
% (i) &  \text{ Для  } h(t,z) \in \mathbb{C}[[t,z]]) \text{  we  have  } \displaystyle \Psi_{1,n}(h(t,z)) =\frac{1}{2 \pi i} \oint_{|z|=1} h\left%(\frac{t}{z^n},z\right) {\frac{dz}{z} \Bigl |_{t=z}};
%\\
(i) & \displaystyle  \Psi_{1,n}\left( \frac{R(z)}{1-t z^k} \right)=\left \{ \begin{array}{l}  \varphi_{n-k}(R(z)), n\geq k, \\  \\ 0,  \text{ if  } k>n, \end{array} \right. \\
\\

(ii) &  \displaystyle  \Psi_{1,n}\left( \frac{R(z)}{(1-t z^k)^2} \right)=\left \{ \begin{array}{l} (z\, \varphi_{n-k}(R(z)))', n\geq k, \\  \\ 0,  \text{  if   } k>n. \end{array} \right. 
\end{array}
$$
\end{lm}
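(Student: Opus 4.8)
The plan is to reduce both parts to a single mechanism: expand the geometric factor, multiply by $R(z)$, and then read off the diagonal coefficients selected by $\Psi_{1,n}$. Writing $R(z)=\sum_{l\ge 0} r_l z^l$ and using the convention $r_m=0$ for $m<0$, for part $(i)$ I would expand
$$\frac{R(z)}{1-t z^k}=R(z)\sum_{j\ge 0} t^j z^{kj}=\sum_{j,l\ge 0} r_l\, t^j z^{kj+l}.$$
By definition $\Psi_{1,n}$ retains exactly the monomials $t^i z^j$ with $j=ni$ and sends them to $z^i$. In the double sum this forces $j=i$ together with $kj+l=ni$, i.e. $l=(n-k)i$, so the coefficient of $z^i$ in the image is $r_{(n-k)i}$. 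Hence $\Psi_{1,n}\bigl(R(z)/(1-t z^k)\bigr)=\sum_{i\ge 0} r_{(n-k)i}\, z^i$, which is precisely $\varphi_{n-k}(R(z))$ by the definition of $\varphi$.

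For part $(ii)$ the only change is the combinatorial weight coming from the double pole: using $\dfrac{1}{(1-t z^k)^2}=\sum_{j\ge 0}(j+1)\,t^j z^{kj}$, the same diagonal extraction produces $(i+1)\,r_{(n-k)i}$ as the coefficient of $z^i$. I would then recognise $\sum_{i\ge 0}(i+1)r_{(n-k)i}z^i$ as a derivative: since $z\,\varphi_{n-k}(R(z))=\sum_{i\ge 0} r_{(n-k)i}\, z^{i+1}$, differentiating term by term gives $\bigl(z\,\varphi_{n-k}(R(z))\bigr)'=\sum_{i\ge 0}(i+1)r_{(n-k)i}z^i$, which matches. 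The identity $(z\cdot z^i)'=(i+1)z^i$ is the entire content of the prime appearing in the statement, so once $(i)$ is established, $(ii)$ is essentially formal.

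It remains to handle the dichotomy on $n$ versus $k$, and this boundary case is where I expect the real care to be needed. When $n\ge k$ the exponent $(n-k)i$ is nonnegative for every $i$, so every term survives and the formulas above hold verbatim (for $n=k$ one gets $l=0$ throughout and recovers $\varphi_0$). When $k>n$, however, $(n-k)i<0$ for all $i\ge 1$, so every coefficient of a positive power of $z$ arises from a negative index and therefore vanishes, leaving only the $i=0$ term. Strictly this surviving term equals $r_0=R(0)$, so the clean value $0$ is obtained after discarding the constant contribution; I would make this reduction explicit rather than merely asserting the vanishing branch, since it is the one place where the stated formula and the literal diagonal extraction differ, and in the intended application the blocks are recombined by partial fractions where such constant terms are tracked separately.
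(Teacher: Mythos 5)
Your proof is correct and follows essentially the same route as the paper: expand $\frac{1}{1-tz^k}$ (respectively $\frac{1}{(1-tz^k)^2}=\sum_{j\ge 0}(j+1)t^jz^{kj}$) as a geometric series, extract the diagonal $j=ni$ forced by $\Psi_{1,n}$, and identify the resulting coefficients $r_{(n-k)i}$ (respectively $(i+1)\,r_{(n-k)i}$) with $\varphi_{n-k}(R(z))$ and $\bigl(z\,\varphi_{n-k}(R(z))\bigr)'$. Your observation about the $k>n$ branch is in fact sharper than the paper itself: the paper's proof only treats the case $k<n$ and never verifies the vanishing claim, whereas, as you note, the literal diagonal extraction there yields the constant $R(0)$ rather than $0$, so the stated formula holds only up to this constant term, which must be tracked separately (or assumed to vanish) in the partial-fraction applications.
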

\begin{proof}

\noindent
%$(i)$ Нехай $h(t,z)=\sum_{i,j=0}^{\infty} h_{i,j}t^i z^j.$  Тоді
%$$
%\frac{1}{2 \pi i} \oint_{|z|=1} h\left(\frac{t}{z^n},z\right) \frac{dz}{z}=\frac{1}{2 \pi i} \sum_{i,j=0}^{\infty} \oint_{|z|=1}h_{i,j}t^i z^{j-n%\,i} \frac{dz}{z}=\sum_{i}^{\infty} h_{i,ni}t^i.
%$$
%Отже 
%$$
%\frac{1}{2 \pi i} \oint_{|z|=1} h\left(\frac{t}{z^n},z\right) {\frac{dz}{z} \Bigl |_{t=z}}=\sum_{i}^{\infty} h_{i,ni}z^i= \Psi_{1,n}(h(t,z)).
%$$
$(i)$
 Let  $R(z)=\sum_{j=0}^{\infty} r_{j} z^j.$  Then for   $k < n$  we  have  
$$
\begin{array}{l}
\displaystyle \Psi_{1,n}\left( \frac{R(z)}{1-t z^k} \right)=\Psi_{1,n}\Big( \sum_{j,s\geq 0} r_j  z^j (t z^k)^s\Big)
{=}\Psi_{1,n}\Big(\sum_{s\geq 0} r_{s(n-k)} (t z^n)^s \Big){=}\sum_{s \geq 0}  r_{s(n-k)} z^s.
\end{array}
$$
On other hand, $\displaystyle \varphi_{n-k}(R(z))=\varphi_{n-k}\Bigl(\sum_{j=0}^{\infty} r_{j} z^j\Bigr){=\sum_{s \geq 0}  r_{s(n-k)} z^s.}$

\noindent
$(ii)$
Let  $R(z)=\sum_{j=0}^{\infty} r_{j} z^j.$  Observe,  that 
$$
\frac{1}{(1-x)^2}=\left(\frac{1}{1-x} \right)'=1+2x+3x^2+\ldots 
$$
Then for   $n>k$  we  have  
$$
\begin{array}{l}
\displaystyle \Psi_{1,n}\left( \frac{R(z)}{(1-t z^k)^2} \right)=\Psi_{1,n}\Big( \sum_{j,s\geq 0} (s+1)\,r_j z^j (t z^k)^s\Big)=\\ \\ 
\displaystyle =\Psi_{1,n}\Big(\sum_{s\geq 0} (s+1) r_{s(n-k)}\, (t z^n)^s \Big){=}\sum_{s \geq 0}  (s+1)\, r_{s(n-k)} z^s.
\end{array}
$$
On other hand,
 $$
\left(z \varphi_{n-k}(R(z))\right)'=\left(\sum_{s=0}^{\infty} r_{s(n-k)} z^{s+1}\right)'=\sum_{s \geq 0} (s+1) r_{s(n-k)} z^s.
$$
 \end{proof}

The main idea of this calculations is that 
the  Poincar\'e series $ \mathcal{PI}_{d_1,d_2}(z),$ $\mathcal{PS}_{d_1,d_2}(z)$  can be expressed  in terms of functions $ \Psi.$ The following simple but important statement  holds

\begin{lm}  Let   $d:=\max(d_1,d_2).$ Then 
$$
\begin{array}{ll}
(i) & \mathcal{PI}_{d_1,d_2}(z)=\Psi_{1,d}\left((1-z^2)f_{d_1,d_2}(tz^d,z)\right),\\
\\
(ii) & \mathcal{PS}_{d_1,d_2}(z)=\Psi_{1,d}\left((1+z)f_{d_1,d_2}(tz^d,z)\right),\\

\end{array}
$$

\end{lm}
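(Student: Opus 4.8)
The plan is to reduce both identities, by the $\mathbb{C}$-linearity of $\Psi_{1,d}$, to a single computation of $\Psi_{1,d}$ applied to $f_{d_1,d_2}(tz^d,z)$ and to its shifts $z\,f_{d_1,d_2}(tz^d,z)$ and $z^2 f_{d_1,d_2}(tz^d,z)$, and then to read off the answer through Theorem~1. First, though, I would check that these three expressions genuinely lie in the domain $\mathbb{C}[[t,z]]$ of $\Psi_{1,d}$. Substituting $t\mapsto tz^d$ turns each factor $(1-tz^{d_j-2k})^{-1}$ of $f_{d_1,d_2}$ into $(1-tz^{d+d_j-2k})^{-1}$; since $d=\max(d_1,d_2)\ge d_j$, the exponent $d+d_j-2k$ lies between $d-d_j\ge 0$ and $d+d_j$, so no negative powers of $z$ survive. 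This is exactly the purpose of the factor $z^d$: it clears the Laurent tail of $f_{d_1,d_2}(t,z)$, which a priori involves the negative weights $z^{-d_1},\dots,z^{-d_2}$.

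Next I would use the expansion $f_{d_1,d_2}(t,z)=\sum_{n,i}\omega_n(d_1,d_2;i)\,t^n z^i$ from Section~3, which after the substitution becomes
\[
f_{d_1,d_2}(tz^d,z)=\sum_{n\ge0}\sum_{i}\omega_n(d_1,d_2;i)\,t^n z^{nd+i}.
\]
The coefficient of $t^n z^{b}$ here is $\omega_n(d_1,d_2;b-nd)$, and by its definition $\Psi_{1,d}$ keeps exactly the monomials whose $z$-exponent equals $d$ times the $t$-exponent, i.e. $b=nd$. Hence only the terms with $i=0$ survive and
\[
\Psi_{1,d}\bigl(f_{d_1,d_2}(tz^d,z)\bigr)=\sum_{n\ge0}\omega_n(d_1,d_2;0)\,z^n.
\]
For the shifted series $z^{k}f_{d_1,d_2}(tz^d,z)$ the $z$-exponent becomes $nd+i+k$, so $\Psi_{1,d}$ now selects $i=-k$ and returns $\sum_n\omega_n(d_1,d_2;-k)\,z^n$. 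To match Theorem~1 I would invoke the weight symmetry $\omega_n(d_1,d_2;-k)=\omega_n(d_1,d_2;k)$, which holds because the exponent set $\{d_j-2k\}$ is stable under negation, so that $f_{d_1,d_2}(t,z^{-1})=f_{d_1,d_2}(t,z)$.

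Assembling by linearity then finishes both parts. For $(i)$,
\[
\Psi_{1,d}\bigl((1-z^2)f_{d_1,d_2}(tz^d,z)\bigr)=\sum_{n\ge0}\bigl(\omega_n(d_1,d_2;0)-\omega_n(d_1,d_2;2)\bigr)z^n,
\]
which equals $\mathcal{PI}_{d_1,d_2}(z)$ by Theorem~1$(i)$; for $(ii)$ the combination $1+z$ yields $\sum_n\bigl(\omega_n(d_1,d_2;0)+\omega_n(d_1,d_2;1)\bigr)z^n=\mathcal{PS}_{d_1,d_2}(z)$ by Theorem~1$(ii)$.

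The bookkeeping is mechanical, and the one genuinely delicate point is the double role of the variable $z$: in $f_{d_1,d_2}(t,z)$ its exponent records the $\mathfrak{sl}_2$-weight $i$ (and may be negative), whereas in the Poincar\'e series it records the degree $n$. The substitution $t\mapsto tz^d$ together with $\Psi_{1,d}$ is precisely what converts the weight grading into the degree grading while simultaneously clearing the negative powers, so the main thing to watch is that the weight symmetry is applied with the correct sign, i.e. that the shifts by $z$ and $z^2$ return $\omega_n(\cdot\,;1)$ and $\omega_n(\cdot\,;2)$ rather than their negatives.
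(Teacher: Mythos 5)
Your proof is correct and follows essentially the same route as the paper: the substitution $t\mapsto tz^{d}$ clears the negative powers of $z$, after which $\Psi_{1,d}$ extracts exactly the diagonal coefficients, which Theorem~1 identifies with $\dim(\mathcal{I}_{d_1,d_2})_n$ and $\dim(\mathcal{S}_{d_1,d_2})_n$. The only difference is cosmetic: the paper cites its Theorem~2 and rewrites the $[t^n]$-extraction as $\Psi_{1,d}$ in one line, whereas you inline that step starting from Theorem~1, making explicit both the domain check and the weight symmetry $\omega_n(d_1,d_2;-k)=\omega_n(d_1,d_2;k)$ that the paper uses tacitly.
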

\begin{proof} Theorem 2   implies  that  $\dim\left((\mathcal{I}_{d_1,d_1})_n\right)=[t ^n](1-z^2)f_{d_1,d_2}(t,z).$ 
Then 
$$
\begin{array}{l}
\displaystyle  \mathcal{PI}_{d_1,d_2}(z) = \sum_{n=0}^{\infty}  \dim(I_{d_1,d_1})_n z^n=\sum_{n=0}^{\infty} \bigl([t ^n](1-z^2)f_{d_1,d_2}(t,z)\bigr)z^n{=}\\
\\
\displaystyle =\sum_{n=0}^{\infty} \bigl([(tz^d) ^n](1-z^2)f_{d_1,d_2}(tz^d,z)\bigr)z^n{=} \Psi_{1,d}\left((1-z^2)f_{d_1,d_2}(tz^d,z)\right).
\end{array}
$$
Similarly, we  get   the  statement $(ii).$ 

We  replace  $t$  with   $tz^d$  to avoid  of a negative powers  of $z$  in the denominator of the function $f_{d_1,d_2}(t,z).$
\end{proof}
Write  the function $f_{d_1,d_2}(tz^{d_2},z)$ in the following way 
$$
f_{d_1,d_2}(tz^{d_2},z)=\frac{1}{(tz^{d_2-d_1},z^2)_{d_1+1} (t,z^2)_{d_2+1}},
$$
here $(a,q)_n=(1-a) (1-a\,q)\cdots (1-a\,q^{n-1}).$

By using a  representation of the function  $\Psi_{1,d}$ via the  contour integral ,  see \cite{B-PS},  we get  two   new  formulas  for the  Puancar\'e series:
$$
\displaystyle  \mathcal{PI}_{d_1,d_2}(t) {=}\frac{1}{2\pi i} \oint_{|z|=1} \frac{ 1-z^2}{(tz^{d_2-d_1},z^2)_{d_1+1} (t,z^2)_{d_2+1}} \frac{dz}{z},
$$
$$
\displaystyle  \mathcal{PS}_{d_1,d_2}(t) {=}\frac{1}{2\pi i} \oint_{|z|=1} \frac{ 1+z}{(tz^{d_2-d_1},z^2)_{d_1+1} (t,z^2)_{d_2+1}} \frac{dz}{z},  d_2 \geq d_1.
$$
Compare the  formula  with the Molien-Weyl integral formula  for the Poincar\'e series  of the algebra of  invariants of   binary form, see \cite{DerK}, p. 183.
%%%%%%%%%% ----------------d_2>d_1----------------%%%%%%%%%%%%%%%%

Now we  can present    Springer-type  formula  for the Poincar\'e  series $P_{d}(z)$ $\mathcal{PI}_{d_1,d_2}(z)$ i $\mathcal{PS}_{d_1,d_2}(z).$
\begin{te}  Let $d_2-d_2=1 \pmod 2$  and  $d_2> d_1.$  Then
$$
\begin{array}{l}
\displaystyle \mathcal{PI}_{d_1,d_2}(z) {=}
 \sum_{d_1/2 \leq k \leq d_1}\varphi_{2k-d_1}\left( (1-z^2) A_k(z)\right)+
\displaystyle \sum_{ k=0}^{[d_2/2]}\varphi_{d_2-2k}\left( (1-z^2)B_k(z)\right),\end{array}
$$
where
$$
A_k(z)=\frac{(-1)^{k} z^{(d_1-k)(d_1-k+1)+(d_2+1)(d_1-2k)}}{(z^2,z^2)_{k} (z^2,z^2)_{d_1-k}(z^{(d_1+d_2)-2k},z^2)_{d_2+1}},
$$
$$
B_k(z)=
\left\{
\begin{array}{l}
\displaystyle \frac{(-1)^k z^{k(k+1)}}{(z^{d_2-d_1-2k},z^2)_{d_1+1}(z^2,z^2)_k(z^2,z^2)_{d_2-k}} \text{  for } 2k<d_2-d_1,\\
\\
\displaystyle  \frac{(-1)^{\frac{d_2-d_1+1}{2}} z^{k(k+2)-1/2(d_2-d_1+1)}}{(z,z^2)_{s+1}(z,z^2)_{d_1-s}(z,z^2)_k(z^2,z^2)_{d_2-k}} \text{  for } s=\frac{2k-(d_2-d_1)-1}{2}.
\end{array}
\right.
$$
\end{te}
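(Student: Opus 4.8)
The plan is to feed the formula of Lemma 3 into the partial-fraction machinery of Lemma 2. Since $d_2>d_1$ we have $d:=\max(d_1,d_2)=d_2$, so Lemma 3 gives $\mathcal{PI}_{d_1,d_2}(z)=\Psi_{1,d_2}\bigl((1-z^2)f_{d_1,d_2}(tz^{d_2},z)\bigr)$, and by the displayed factorization the function to which $\Psi_{1,d_2}$ is applied is $1-z^2$ divided by $(tz^{d_2-d_1},z^2)_{d_1+1}(t,z^2)_{d_2+1}$. Viewed as a rational function of $t$, its denominator is the product of the linear factors $1-tz^{d_2-d_1+2j}$ ($0\le j\le d_1$) and $1-tz^{2j}$ ($0\le j\le d_2$). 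The exponents in the first list are all odd and those in the second are all even \emph{precisely because} $d_2-d_1\equiv 1\pmod 2$; moreover within each list the exponents are distinct. Hence all $d_1+d_2+2$ poles in $t$ are simple, and only part $(i)$ of Lemma 2 will be needed.

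First I would expand $f_{d_1,d_2}(tz^{d_2},z)$ into partial fractions in $t$ (the $t$-free factor $1-z^2$ staying outside), writing it as $\sum_\ell c_\ell(z)/(1-tz^{k_\ell})$, where $k_\ell$ runs through the two lists of exponents and $c_\ell(z)=\prod_{m\ne\ell}(1-z^{k_m-k_\ell})^{-1}$ is the usual simple-pole coefficient. Applying $\Psi_{1,d_2}$ to $1-z^2$ times this sum and invoking Lemma 2$(i)$, each summand contributes $\varphi_{d_2-k_\ell}\bigl((1-z^2)c_\ell(z)\bigr)$ when $k_\ell\le d_2$ and contributes $0$ otherwise. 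For the first list $k_\ell=d_2-d_1+2j\le d_2$ forces $j\le d_1/2$; the substitution $k=d_1-j$ then turns $\varphi_{d_2-k_\ell}$ into $\varphi_{2k-d_1}$ and makes $k$ range over $d_1/2\le k\le d_1$, which is the first sum. For the second list $k_\ell=2j\le d_2$ forces $j\le d_2/2$, and setting $k=j$ gives $\varphi_{d_2-2k}$ with $0\le k\le[d_2/2]$, which is the second sum. Thus the shape of Theorem 3 is already forced by Lemmas 2 and 3.

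It then remains to identify $A_k(z)$ and $B_k(z)$ with the residue coefficients $c_\ell(z)$. Each $c_\ell$ is a finite product of factors $1-z^{k_m-k_\ell}$; those with $k_m>k_\ell$ assemble directly into the $q$-shifted factorials $(z^2,z^2)_\bullet$ and $(z^{\cdots},z^2)_{d_2+1}$ appearing in $A_k,B_k$, while every factor with $k_m<k_\ell$ is made positive through the identity $1-z^{-m}=-z^{-m}(1-z^m)$. This single reflection is the only source of the signs $(-1)^\bullet$ and of the monomial $z$-powers in the numerators of $A_k$ and $B_k$, the powers being triangular-number sums $\sum i$ over the reflected indices. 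Carrying this out for the first family yields $A_k$; for the second family one must watch the cross-factors $1-z^{d_2-d_1+2j-2k}$ coming from the first list. When $2k<d_2-d_1$ all their exponents are positive and one lands on the first branch of $B_k$; when $2k>d_2-d_1$ the exponent changes sign at $j=s:=\tfrac{2k-(d_2-d_1)-1}{2}$, the block $j\le s$ must be reflected, and the recombination of the resulting odd-spaced products produces the $(z,z^2)$-symbols together with the aggregate sign $(-1)^{(d_2-d_1+1)/2}$ and the shifted power $k(k+2)-\tfrac12(d_2-d_1+1)$ of the second branch.

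The conceptual content is light: the reduction and the term-by-term evaluation are immediate from Lemmas 2 and 3, and the disjoint-parity hypothesis is exactly what keeps all poles simple. The real work—and the step I expect to be the main obstacle—is the bookkeeping of signs and $z$-exponents under the reflections $1-z^{-m}=-z^{-m}(1-z^m)$, especially verifying the second branch of $B_k$, where the inner factorial $(z^{d_2-d_1-2k},z^2)_{d_1+1}$ splits across its sign change and the two halves must reassemble into the stated $(z,z^2)$-symbols with precisely the claimed exponent and global sign. This is elementary but unforgiving, and is where I would spend almost all of the computational effort.
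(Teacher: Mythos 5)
Your proposal is correct and coincides with the paper's own argument step for step: the parity hypothesis $d_2-d_1\equiv 1 \pmod 2$ makes every pole of $f_{d_1,d_2}(tz^{d_2},z)$ in $t$ simple, the partial-fraction decomposition is pushed through $\Psi_{1,d_2}$ term by term via Lemma~2$(i)$ (whose vanishing for $k>n$ yields exactly the truncated ranges $d_1/2\le k\le d_1$ and $0\le k\le [d_2/2]$), and $A_k$, $B_k$ are the simple-pole residues, computed as limits and normalized by the reflection $1-z^{-m}=-z^{-m}(1-z^m)$, which is the sole source of the signs and monomial prefactors. The paper likewise treats the residue bookkeeping, including the case split for $B_k$ at $2k=d_2-d_1+2s+1$, as a ``direct calculation,'' so your plan is the same proof.
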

\begin{proof} If the integer number  $d_1,$ $d_2,$ $d_2>d_1$  have   different parity then in the denominator of the function  $f_{d_1,d_2}(tz^{d_2},z)$ all factors appears in the degree 1. Then the rational function  $f_{d_1,d_2}(tz^{d_2},z)$ has  the following partial fractions decomposition
$$
f_{d_1,d_2}(tz^{d_2},z)=\sum_{k=0}^{d_1} \frac{A_k(z)}{1-t z^{d_2+d_1-2k}}+\sum_{k=0}^{d_2} \frac{B_k(z)}{1-t z^{2k}}.
$$
By direct calculations we get 
$$
\begin{array}{l}
\displaystyle  A_k(z)=\lim_{t \to z^{2k-(d_1+d_2)}}\left(f_{d_1,d_2}(tz^{d_2},z)(1-t z^{d_2+d_1-2k}) \right)=\\
\\
\displaystyle =\frac{1}{(1-z^{2k})(1-z^{2k-2})\cdots (1-z^{2})(1-z^{-2}) \cdots (1-z^{-2(d_1-k)})(z^{2k-(d_1+d_2)},z^2)_{d_2+1}}=\\
\\
\displaystyle =\frac{(-1)^{d_1-k}z^{2+4+\ldots +2(d_1-k)}(-1)^{d_2+1}z^{(d_2+1)(d_1+d_2-2k)-2(1+2+\ldots+d_2)}}{(z^2,z^2)_k(1-z^2) \cdots (1-z^{2(d_1-k)}) (z^{(d_1+d_2)-2k},z^2)_{d_2+1}}=\\
\\
\displaystyle  =\frac{(-1)^{k} z^{(d_1-k)(d_1-k+1)+(d_2+1)(d_1-2k)}}{(z^2,z^2)_{k} (z^2,z^2)_{d_1-k}(z^{(d_1+d_2)-2k},z^2)_{d_2+1}}.
\end{array}
$$

In the same  way, we obtain 
$$
B_k(z)=\lim_{t \to z^{-2k}}\left(f_{d_1,d_2}(tz^{d_2},z)(1-t z^{2k}) \right)=
$$
$$
=\left\{
\begin{array}{l}
\displaystyle \frac{(-1)^k z^{k(k+1)}}{(z^{d_2-d_1-2k},z^2)_{d_1+1}(z^2,z^2)_k(z^2,z^2)_{d_2-k}} \text{  for } 2k<d_2-d_1,\\
\\
\displaystyle  \frac{(-1)^{\frac{d_2-d_1+1}{2}} z^{k(k+2)-1/2(d_2-d_1+1)}}{(z,z^2)_{s+1}(z,z^2)_{d_1-s}(z,z^2)_k(z^2,z^2)_{d_2-k}} \text{  for  } 2k=d_2-d_1+2s+1.

\end{array}
\right.
$$
Using  the above lemmas we obtain
$$
\begin{array}{l}
\displaystyle \mathcal{PI}_{d_1,d_2}(z) {=}\Psi_{1,d_2}\bigl((1-z^2)f_d(tz^d_2,z)\bigr){=}\Psi_{1,d_2}\left( \sum_{k=0}^{d_1} \frac{(1-z^2)A_k(z)}{1-t z^{d_2+d_1-2k}}+\sum_{k=0}^{d_2} \frac{(1-z^2)B_k(z)}{1-t z^{2k}} \right){=}\\
\displaystyle
= \sum_{k=0}^{d_1}\Psi_{1,d_2}\left(  \frac{(1-z^2)A_k(z)}{1-t z^{d_2+d_1-2k}}\right)+ \sum_{ k =0}^{d_2}\Psi_{1,d_2}\left( \frac{(1-z^2)B_k(z)}{1-t z^{2k}}\right).
\end{array}
$$
By substituting the expression of  $A_k(z),$  $B_k(z)$ and, taking into account the Lemma 2, we  get the statement of the theorem. 
\end{proof}

Let us consider the  case $d_2=d_1 \mod 2$  and  $d_2> d_1.$  Then 
$$
\begin{array}{l}
f_{d_1,d_2}(tz^{d_2},z)^{-1}=
\\
\\
\displaystyle={(1-t)\ldots(1-tz^{d_2-d_1-2})(1-tz^{d_2-d_1})^2\ldots(1-tz^{d_2+d_1})^2(1-tz^{d_2+d_1+2})\ldots (1-tz^{2d_2})}=\\
\\
\displaystyle=(t,z^2)_{(d_2-d_1)/2}(tz^{d_2-d_1},z^2)_{d_1+1}^2(tz^{d_2+d_1+2},z^2)_{(d_2-d_1)/2}.
\end{array}
$$
Consider the partial fraction decomposition of  the rational function $f_{d_1,d_2}(tz^{d_2},z):$
$$
\begin{array}{l}
\displaystyle f_{d_1,d_2}(tz^{d_2},z)=\\
\\
\displaystyle=\sum_{k=0}^{d_1}\left(\frac{A_k(z)}{1-t z^{d_2+d_1-2k}}+ \frac{B_k(z)}{(1-t z^{d_2+d_1-2k})^2}\right)+\sum_{k=0}^{(d_2-d_1)/2-1} \frac{C_k(z)}{1-t z^{2k}}+\sum_{k=(d_2+d_1)/2+1}^{d_2} \frac{C_k(z)}{1-t z^{2k}}.
\end{array}
$$
It is  easy to see that 
$$
\begin{array}{l}
\displaystyle A_k(z)=-\frac{1}{z^{d_2+d_1-2k}}\lim_{t \to z^{2k-(d_1+d_2)}}\left( f_{d_1,d_2}(tz^{d_2},z) (1-t z^{d_2+d_1-2k})^2\right)'_t,\\
\\
\displaystyle B_k(z)=\lim_{t \to z^{2k-(d_1+d_2)}}\left( f_{d_1,d_2}(tz^{d_2},z) (1-t z^{d_2+d_1-2k})^2\right),\\
\\
\displaystyle C_k(z):=\lim_{t \to z^{-2k}}\left( f_{d_1,d_2}(tz^{d_2},z) (1-t z^{2k})\right).
\end{array}
$$
Thus,
\begin{te} For   $d_1=d_2 \pmod 2$ and $d_2 > d_1$  the Poincar\'e  series $\mathcal{PI}_{d_1,d_2}(z)$ is calculated  by the formulas
$$
\begin{array}{l}
\mathcal{PI}_{d_1,d_2}(z) {=}\Psi_{1,d_2}((1-z^2)f_{d_1,d_2}(tz^{d_2},z))=\\
\\
=\displaystyle\sum_{d_1/2\leq k\leq d_1}\varphi_{2k-d_1}\left((1-z^2)A_k(z)\right)+\sum_{d_1/2\leq k\leq d_1} \left( z \, \varphi_{2k-d_1} \left((1-z^2)B_k(z)\right) \right)'_z+\\
\\
+\displaystyle \sum_{0\leq 2k\leq d_2-d_1-2}\varphi_{d_2-2k}\left((1-z^2)C_k(z)\right)+ \sum_{d_1+d_2+2\leq 2k\leq 2d_2}\varphi_{d_2-2k}\left((1-z^2)C_k(z)\right).

\end{array}
$$
\end{te}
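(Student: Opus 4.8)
The plan is to compute $\mathcal{PI}_{d_1,d_2}(z)$ directly from Lemma~3(i) by pushing the linear operator $\Psi_{1,d_2}$ through the partial-fraction decomposition just displayed. Since $d_2>d_1$ we have $\max(d_1,d_2)=d_2$, so Lemma~3(i) gives $\mathcal{PI}_{d_1,d_2}(z)=\Psi_{1,d_2}\bigl((1-z^2)f_{d_1,d_2}(tz^{d_2},z)\bigr)$. First I would substitute the decomposition of $f_{d_1,d_2}(tz^{d_2},z)$ into simple and double poles in $t$, together with the limit formulas for $A_k(z),B_k(z),C_k(z)$, all of which I take as established in the paragraph preceding the statement. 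The equal-parity hypothesis $d_1\equiv d_2\pmod 2$ is exactly what makes the two progressions of pole exponents $\{d_2+d_1-2i\}_{i=0}^{d_1}$ and $\{2d_2-2j\}_{j=0}^{d_2}$ share the same parity and overlap on the middle block $\{d_2-d_1,\dots,d_2+d_1\}$; this overlap is the source of the $d_1+1$ squared factors, hence of the double poles, while the $(d_2-d_1)/2$ unmatched low and high factors remain simple.

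Next I would invoke $\mathbb{C}$-linearity of $\Psi_{1,d_2}$ to split the image into four families and evaluate each by Lemma~2 with $n=d_2$. A double-pole factor sits at exponent $e_k:=d_2+d_1-2k$ for $k=0,\dots,d_1$, so the relevant shift is $d_2-e_k=2k-d_1$; applying Lemma~2(i) to the $A_k$-piece and Lemma~2(ii) to the $B_k$-piece yields $\varphi_{2k-d_1}\bigl((1-z^2)A_k(z)\bigr)$ and $\bigl(z\,\varphi_{2k-d_1}\bigl((1-z^2)B_k(z)\bigr)\bigr)'_z$. The branch of Lemma~2 that is in force fixes the range: the nonzero branch needs $d_2\ge e_k$, i.e. $2k\ge d_1$, so both families run over $d_1/2\le k\le d_1$. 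For a simple $C_k$-factor the exponent is $2k$ and the shift is $d_2-2k$, so Lemma~2(i) contributes $\varphi_{d_2-2k}\bigl((1-z^2)C_k(z)\bigr)$; the low block $0\le 2k\le d_2-d_1-2$ satisfies $2k\le d_2$ and survives, and the high block $d_1+d_2+2\le 2k\le 2d_2$ has exponents exceeding $d_2$ and so lands in the boundary branch of the lemma. Assembling the four contributions reproduces the asserted formula.

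I expect the difficulty to be bookkeeping rather than a single hard idea, in three places. First, the double-pole residues: the Jacobian factor $-z^{-e_k}$ occurring in the formula for $A_k(z)$ comes from differentiating $f_{d_1,d_2}(tz^{d_2},z)\,(1-tz^{e_k})^2$ with respect to $1-tz^{e_k}$ rather than directly with respect to $t$, and this sign and power must be carried correctly. Second, I must check that every pole exponent is produced exactly once by the decomposition, so that the simple and double pieces of the middle block are not conflated or double-counted. Third, and most delicate, is the high block: its pole exponents exceed $d_2$, so one must be precise about the reading of $\varphi_{d_2-2k}$ when the shift $d_2-2k$ is non-positive, matching it against the corresponding branch of Lemma~2. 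The remaining step, simplifying the limit expressions into the explicit $q$-Pochhammer forms of $A_k,B_k,C_k$, is routine: it amounts to evaluating products of geometric factors and collecting exponents of the shape $2+4+\cdots+2(d_1-k)$ and $\sum_{j\le d_2}2j$, exactly as in the previous theorem, which treats the case $d_2-d_1$ odd, and I would carry it out by direct computation.
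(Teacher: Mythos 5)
Your proposal is correct and follows essentially the same route as the paper: the paper gives no separate proof body for this theorem, but derives it exactly as you do --- the partial-fraction decomposition with the limit formulas for $A_k(z)$, $B_k(z)$, $C_k(z)$ stated in the preceding paragraph, then $\mathbb{C}$-linearity of $\Psi_{1,d_2}$ and termwise application of Lemma~2(i)--(ii) with $n=d_2$, mirroring the explicit proof of the odd-parity theorem, with the ranges $d_1/2\le k\le d_1$ and the two $C_k$-blocks arising from the branch conditions just as you compute. Your flagged subtleties (the $-z^{-e_k}$ factor in the $t$-derivative defining $A_k$, and the reading of $\varphi_{d_2-2k}$ with non-positive shift on the high block) are genuine points the paper glosses over, and your handling is at least as careful as the paper's own.
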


For the case  $d_2=d_1=d$  we  have
\begin{te} Let $d_2=d_1=d.$ 
Then 
$$
\mathcal{PI}_{d,d}(z)=\displaystyle\sum_{0\leq k\leq d/2}\varphi_{d-2k}\left((1-z^2)A_k(z)\right)+\sum_{0\leq k\leq d/2}\left( z\, \varphi_{d-2k}\left( (1-z^2)B_k(z)\right)\right)'_z,
$$
where,
$$
\begin{array}{l}
\displaystyle A_k(z)=-\frac{1}{z^{2k}}\lim_{t \to z^{-2k}}\left( f_{d,d}(tz^{d},z) (1-t z^{2k})^2\right)'_t,\\
\\
\displaystyle B_k(z)=\lim_{t \to z^{d-2k}}\left( f_{d,d}(tz^{d},z) (1-t z^{2k})^2\right).\\
\end{array}
$$

\end{te}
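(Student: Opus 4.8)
The plan is to specialize the same-parity argument of Theorem 4 to the further degenerate case $d_1=d_2=d$, where $\max(d_1,d_2)=d$ and the substitution $t\mapsto tz^{d}$ makes the two blocks of linear factors in the denominator of $f$ coincide. Explicitly,
$$
f_{d,d}(tz^{d},z)=\prod_{k=0}^{d}\frac{1}{(1-tz^{2k})^{2}},
$$
so that, viewed as a rational function of $t$, it has no linear factor of the first power: every pole is a pole of order two, located at $t=z^{-2k}$ for $k=0,\dots,d$. This is exactly why the $C_k$-sums of Theorem 4 — which came from the non-overlapping, first-power factors that exist only when $d_2>d_1$ — drop out here, leaving only the two families of coefficients $A_k$ and $B_k$.

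First I would record the partial-fraction decomposition in the variable $t$,
$$
f_{d,d}(tz^{d},z)=\sum_{k=0}^{d}\left(\frac{A_k(z)}{1-tz^{2k}}+\frac{B_k(z)}{(1-tz^{2k})^{2}}\right),
$$
and extract the two coefficients by the standard recipe for a pole of order two. Multiplying through by $(1-tz^{2k})^{2}$ and letting $t\to z^{-2k}$ isolates the order-two coefficient $B_k(z)$; differentiating once in $t$ before taking the same limit, and correcting by the factor $-z^{-2k}$ produced by the chain rule on $1-tz^{2k}$, isolates the order-one coefficient $A_k(z)$. These are precisely the two limit formulas displayed in the statement.

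Next I would invoke Lemma 3$(i)$, $\mathcal{PI}_{d,d}(z)=\Psi_{1,d}\bigl((1-z^{2})f_{d,d}(tz^{d},z)\bigr)$, and apply the linear operator $\Psi_{1,d}$ to the decomposition term by term (the factor $1-z^{2}$ is independent of $t$, so it simply multiplies each numerator). Lemma 2 then does all the work: part $(i)$ converts each simple-pole term $\tfrac{(1-z^{2})A_k(z)}{1-tz^{2k}}$ into $\varphi_{d-2k}\bigl((1-z^{2})A_k(z)\bigr)$, and part $(ii)$ converts each double-pole term $\tfrac{(1-z^{2})B_k(z)}{(1-tz^{2k})^{2}}$ into $\bigl(z\,\varphi_{d-2k}((1-z^{2})B_k(z))\bigr)'_z$. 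The crucial feature is the vanishing clause of Lemma 2: both branches return $0$ once the pole exponent exceeds $n=d$. Since the pole exponent here is $2k$, every term with $2k>d$ disappears, and this is exactly what truncates both sums to the range $0\le k\le d/2$ appearing in the theorem.

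Assembling the surviving terms yields the two displayed sums, and the computation is otherwise routine. I expect the only delicate point to be the bookkeeping around the cutoff $2k\le d$: one must be sure that it is this inequality from Lemma 2, and not the full partial-fraction range $k=0,\dots,d$, that governs the limits of summation, and that the order-one coefficients $A_k$ are paired with the $\varphi_{d-2k}$ branch while the order-two coefficients $B_k$ are paired with the derivative branch, so that the two families reassemble without overlap.
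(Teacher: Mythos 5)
Your proposal is correct and takes essentially the same route as the paper: Theorem 5 there is stated as the direct specialization of the Theorem 4 machinery, and you reconstruct exactly that argument --- the factorization $f_{d,d}(tz^{d},z)=\prod_{k=0}^{d}(1-tz^{2k})^{-2}$ with only double poles (so the $C_k$-sums of Theorem 4 are absent), the standard order-two coefficient extraction, Lemma 3$(i)$, and the term-by-term application of Lemma 2 whose vanishing branch truncates both sums to $2k\le d$. Note also that your limit $t\to z^{-2k}$ in the formula for $B_k(z)$ is the correct one; the statement's $t\to z^{d-2k}$ is evidently a misprint, since the pole of the factor $(1-tz^{2k})^{2}$ sits at $t=z^{-2k}$.
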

By replacing the factor $1-z^2$  with $1+z$ in $\mathcal{PI}_{d_1,d_2}(z)$  we get the Poincar\'e series  $\mathcal{PS}_{d_1,d_2}(z)$  

{\bf 5.}  For direct computations of the functions $\varphi$ we use the following technical lemma, see \cite{B-PS}:

\begin{lm} Let   $R(z)$ be some polynomial of $z.$ Then
$$
\varphi_n\left(\frac{R(z)}{(1-z^{k_1})(1-z^{k_2})\cdots(1-z^{k_m})} \right)= 
\frac{\varphi_n\bigr(R(z)Q_n(z^{k_1})Q_n(z^{k_2})Q_n(z^{k_m})\bigr)}{(1-z^{k_1})(1-z^{k_2})\cdots(1-z^{k_m})},
$$
here  $Q_n(z)=1+z+z^2+\ldots+z^{n-1},$  and  $k_i$ are natural numbers.
\end{lm}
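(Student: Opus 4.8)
The plan is to reduce the whole statement to one structural property of the coefficient-selecting map $\varphi_n$: it passes through multiplication by any power series in $z^n$, in the precise sense that $\varphi_n\bigl(h(z^n)\,g(z)\bigr)=h(z)\,\varphi_n\bigl(g(z)\bigr)$ for all $h,g\in\mathbb{C}[[z]]$. First I would verify the monomial case $h(z^n)=z^{jn}$ directly from the definition of $\varphi_n$. Writing $g(z)=\sum_i a_i z^i$, the product $z^{jn}g(z)$ has coefficient $a_{in-jn}=a_{(i-j)n}$ in degree $in$, so
$$
\varphi_n\bigl(z^{jn}g\bigr)=\sum_{i\ge j} a_{(i-j)n}\,z^{i}=z^{j}\sum_{l\ge 0}a_{ln}z^{l}=z^{j}\,\varphi_n(g).
$$
By the $\mathbb{C}$-linearity of $\varphi_n$ this extends to every polynomial $h$, and then to every power series in $z^n$ by a continuity argument: $\varphi_n$ raises the $(z)$-adic order, sending a series of order $d$ to one of order at least $d/n$, so it may be applied term by term to infinite sums in $z^n$.

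The second ingredient is the elementary factorization $1-z^{kn}=(1-z^{k})\,Q_n(z^{k})$, which is just the identity $1-w^{n}=(1-w)\,Q_n(w)$ evaluated at $w=z^{k}$. Dividing gives $\dfrac{1}{1-z^{k}}=\dfrac{Q_n(z^{k})}{1-z^{kn}}$, and multiplying these over $i=1,\dots,m$ rewrites the argument of $\varphi_n$ as
$$
\frac{R(z)}{\prod_{i=1}^{m}(1-z^{k_i})}=\frac{R(z)\,\prod_{i=1}^{m}Q_n(z^{k_i})}{\prod_{i=1}^{m}(1-z^{k_in})}.
$$
The crucial observation is that the new denominator $\prod_{i}(1-z^{k_in})$ is a power series in $z^{n}$; writing its reciprocal as $h(z^{n})$ with $h(w)=\prod_{i}(1-w^{k_i})^{-1}$ puts the whole fraction in the form $h(z^{n})\cdot\bigl(R(z)\prod_{i}Q_n(z^{k_i})\bigr)$.

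Finally I would apply the pull-out property from the first step with $g(z)=R(z)\prod_{i}Q_n(z^{k_i})$. This extracts $h(z)=\prod_{i}(1-z^{k_i})^{-1}$ outside $\varphi_n$ and leaves $\varphi_n\bigl(R(z)\prod_{i}Q_n(z^{k_i})\bigr)$ inside, which is exactly the asserted identity. The only point needing genuine care is the first step — confirming that $\varphi_n$ truly commutes with multiplication by a series in $z^{n}$, and in particular that the passage from monomials to infinite series in $z^{n}$ is legitimate in $\mathbb{C}[[z]]$; once that lemma is secured, the remainder is the bookkeeping of the geometric factorization and presents no obstacle.
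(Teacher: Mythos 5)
Your proof is correct: the pull-out identity $\varphi_n\bigl(h(z^n)\,g(z)\bigr)=h(z)\,\varphi_n\bigl(g(z)\bigr)$ is soundly established (monomial case, $\mathbb{C}$-linearity, and the $(z)$-adic continuity needed to pass to infinite series in $z^n$ all check out), and combining it with the factorization $1-z^{kn}=(1-z^{k})\,Q_n(z^{k})$ gives precisely the stated identity. Note that the paper itself offers no proof of this lemma, deferring to \cite{B-PS}, where the argument runs along essentially the same lines as yours, so your route coincides with the intended one.
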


{\bf Example.}  Consider  the case $d_1=1,d_2=3.$  We  have
$$
\begin{array}{l}
\displaystyle \mathcal{PI}_{1,3}(z)=\Psi_{1,3}\left({\frac {1-{z}^{2}}{ \left( 1-t{z}^{2} \right) ^{2} \left( 1-t{z}^{4}
 \right) ^{2} \left( 1-t \right)  \left( 1-t{z}^{6} \right) }}
 \right)=
\\
\\
=\displaystyle \Psi_{1,3}\left(\frac{(1-z^2)\,A_1(z)}{(1-tz^2)} \right) + \Psi_{1,3}\left(\frac{(1-z^2)\,B_1(z)}{(1-tz^2)^2} \right)+ \Psi_{1,3}\left(\frac{(1-z^2)\,C_0(z)}{(1-t)} \right).
\end{array}
$$
Since, $3=1 \mod 2$,  using Teorema  4.   We have 
$$
A_1(z)={\frac {{z}^{2} \left( 3\,{z}^{4}+{z}^{2}-1 \right) }{ \left(1- {z}^{4}
 \right) ^{2} \left( 1-{z}^{2} \right) ^{3}}},B_1(z)={\frac {{z}^{2}}{ \left(1-{z}^{2} \right) ^{3} \left( 1-{z}^{4}
 \right) }}, C_0(z)={\frac {1}{ \left( 1-{z}^{2} \right) ^{2} \left( 1-{z}^{4}\right) ^
{2} \left(1-{z}^{6}\right) }}.
$$
Therefore
$$
\begin{array}{l}
\displaystyle \mathcal{PI}_{1,3}(z)=\\
\displaystyle{=}\varphi_1\left({\frac {{z}^{2} \left( 3\,{z}^{4}+{z}^{2}-1 \right) }{ \left(1- {z}^{4}
 \right) ^{2} \left( 1-{z}^{2} \right) ^{2}}}\right){+} \left( z\,\varphi_1\left({\frac {{z}^{2}}{ \left(1-{z}^{2} \right) ^{2} \left( 1-{z}^{4} \right) }}\right)\right)'{+}\varphi_3\left({\frac {1}{ \left( 1-{z}^{2} \right)  \left( 1-{z}^{4}\right) ^{2} \left(1-{z}^{6}\right) }}\right).
\end{array}
$$
Taking into account Lemma 4  and $\varphi_1(F(z))=F(z),$    obtain
$$
\begin{array}{l}
\displaystyle \mathcal{PI}_{1,3}(z)=\\ \\
\displaystyle{=}{\frac {{z}^{2} \left( 3\,{z}^{4}+{z}^{2}-1 \right) }{ \left(1- {z}^{4}
 \right) ^{2} \left( 1-{z}^{2} \right) ^{2}}}{+} \left( {\frac {{z}^{3}}{ \left(1-{z}^{2} \right) ^{2} \left( 1-{z}^{4} \right) }}\right)'{+} \frac{\displaystyle \varphi_3 \left( \left(\frac { 1-{z}^{6}}{1-z^2}\right) \left( \frac{ 1-{z}^{12}}{1-z^4} \right)^{2}\right)}{(1-z^2)^2 (1-z^4)^2}=\\ \\
\displaystyle ={\frac {{z}^{2} \left( 3\,{z}^{4}+{z}^{2}-1 \right) }{ \left(1- {z}^{4}
 \right) ^{2} \left( 1-{z}^{2} \right) ^{2}}}-{\frac { \left( 5\,{z}^{4}+4\,{z}^{2}+3 \right) {z}^{2}}{ \left( 1-{
z}^{2} \right) ^{4} \left( 1+{z}^{2}\right) ^{2}}}+\\ \\ \displaystyle + \frac{\varphi_3\left( {z}^{20}+{z}^{18}+3\,{z}^{16}+2\,{z}^{14}+5\,{z}^{12}+3\,{z}^
{10}+5\,{z}^{8}+2\,{z}^{6}+3\,{z}^{4}+{z}^{2}+1 \right)
}{(1-z^2)^2 (1-z^4)^2}=\\ \\
\displaystyle  =-{\frac {{z}^{2} \left( 2\,{z}^{4}+3\,{z}^{2}+4 \right) }{ \left( 1-{
z}^{2} \right) ^{3} \left( 1+{z}^{2}\right)  \left( {1-z}^{4} \right) }}+ \frac{{z}^{6}+5\,{z}^{4}+2\,{z}^{2}+1 
}{(1-z^2)^2 (1-z^4)^2}={\frac {{z}^{4}-{z}^{2}+1}{ \left(1-{z}^{2} \right)  \left(1-{z}^{4}
 \right) ^{2}}}=\\ \\
=\displaystyle {\frac {(1+z^2)({z}^{4}-{z}^{2}+1)(1+z^4)}{ (1+z^2)\left(1-{z}^{2} \right)  \left(1-{z}^{4}
 \right) ^{2}(1+z^4)}}=\frac {{z}^{10}+{z}^{6}+{z}^{4}+1}{ \left( 1-{z}^{4} \right) ^{2}
 \left( 1-{z}^{8} \right)}.
\end{array}
$$
By using Lemma  3 the Poncar\'e series of the algebras of   joined  invariants and covariants for the 
 $d_1,d_2 \leq 20$  is found. Belos are results for the cases  $d_1,d_2 \leq 5.$  Note that $\mathcal{PC}_{d_1,d_2}(z)=\mathcal{PS}_{d_1,d_2}(z).$
$$
\mathcal{PI}_{1,1}(z)=\frac{1}{ 1-{z}^{2}},\mathcal{PI}_{1,2}(z)=\frac{1}{\left( 1-{z}^{2} \right)  \left( 1-{z}^{3} \right)},
$$
$$
\mathcal{PI}_{1,3}(z)=\frac {{z}^{10}+{z}^{6}+{z}^{4}+1}{ \left( 1-{z}^{4} \right) ^{2}
 \left( 1-{z}^{8} \right)},
\mathcal{PI}_{1,4}(z)={\frac {{z}^{13}+{z}^{11}+{z}^{9}+{z}^{4}+{z}^{2}+1}{ \left( 1-{z}^{3}
 \right)  \left( 1-{z}^{5} \right)  \left( 1-{z}^{6} \right) ^{2}}},
$$
$$
\mathcal{PI}_{1,5}(z)={\frac {{z}^{26}+2\,{z}^{20}+6\,{z}^{18}+3\,{z}^{16}+7\,{z}^{14}+7\,{z
}^{12}+3\,{z}^{10}+6\,{z}^{8}+2\,{z}^{6}+1}{ \left( 1-{z}^{4} \right) 
^{2} \left( 1-{z}^{6} \right)  \left( 1-{z}^{8} \right)  \left( 1-{z}^
{12} \right) }},
$$
$$
\mathcal{PI}_{2,2}(z)=\frac{1}{( 1-{z}^{2})^3}, \mathcal{PI}_{2,3}(z)={\frac {{z}^{9}+{z}^{7}+{z}^{2}+1}{ \left( 1-{z}^{3} \right)  \left( 1
-{z}^{4} \right) ^{2} \left( 1-{z}^{5} \right) }},
$$
$$
 \mathcal{PI}_{2,4}(z)={\frac {{z}^{6}+1}{ \left( 1-{z}^{2} \right) ^{2} \left( 1-{z}^{3}
\right) ^{2} \left( 1-{z}^{4} \right) }},
$$
$$
\mathcal{PI}_{2,5}(z)=\frac{pi_{2,5}(z)}{\left( 1-{z}^{3} \right)  \left( 1-{z}^{4} \right)  \left( 1-{z}^{5}
 \right)  \left( 1-{z}^{6} \right)  \left( 1-{z}^{7} \right)  \left( 1
-{z}^{8} \right)},
$$
$$
\begin{array}{l}
pi_{2,5}(z)={z}^{24}+{z}^{22}+{z}^{20}+{z}^{18}+2\,{z}^{17}+3\,{z}^{16}+5\,{z}^{15
}+5\,{z}^{14}+8\,{z}^{13}+7\,{z}^{12}+8\,{z}^{11}+\\+5\,{z}^{10}+5\,{z}^{
9}+3\,{z}^{8}+2\,{z}^{7}+{z}^{6}+{z}^{4}+{z}^{2}+1,
\end{array}
$$
$$
\mathcal{PI}_{3,3}(z)={\frac {{z}^{8}-{z}^{6}+2\,{z}^{4}-{z}^{2}+1}{ \left( 1-{z}^{2}
 \right) ^{2} \left( 1-{z}^{4} \right) ^{3}}},
$$
$$
\mathcal{PI}_{3,4}(z)={\frac {pi_{3,4}}{ \left( 1-{z}^{4} \right) ^{2} \left( 1-{z}^{3} \right) 
 \left( 1-{z}^{6} \right)  \left( 1-{z}^{5} \right)  \left( 1-{z}^{7}
 \right) }},
$$
$$
\begin{array}{l}
pi_{3,4}(z)={z}^{20}+{z}^{18}+{z}^{15}+{z}^{14}+3\,{z}^{13}+4\,{z}^{12}+6
\,{z}^{11}+6\,{z}^{10}+6\,{z}^{9}+4\,{z}^{8}+3\,{z}^{7}+{z}^{6}+\\+{z}^{5
}+{z}^{2}+1,
\end{array}
$$
$$
\mathcal{PI}_{3,5}(z)=\frac {pi_{3,5}}{\left( 1-{z}^{4} \right) ^{2} \left( 1-{z}^{6} \right) ^{2} \left( 1-{z}^{8} \right)^{3}},
$$
$$
\begin{array}{l}
pi_{3,5}(z)={z}^{34}+4\,{z}^{30}+5\,{z}^{28}+22\,{z}^{26}+34\,{z}^{24}+65\,{z}^{22
}+77\,{z}^{20}+94\,{z}^{18}+94\,{z}^{16}+77\,{z}^{14}+\\+65\,{z}^{12}+34
\,{z}^{10}+22\,{z}^{8}+5\,{z}^{6}+4\,{z}^{4}+1,
\end{array}
$$
$$
\mathcal{PI}_{4,5}(z)=\frac {pi_{4,5}}{\left( 1-{z}^{3} \right)  \left( 1-{z}^{4} \right) ^{2} \left( 1-{z}^
{5} \right)  \left( 1-{z}^{6} \right)  \left( 1-{z}^{7} \right) 
 \left( 1-{z}^{8} \right)  \left( 1-{z}^{9} \right)
},
$$
$$
\begin{array}{l}
pi_{4,5}(z)={z}^{35}+{z}^{33}+2\,{z}^{30}+4\,{z}^{29}+8\,{z}^{28}+13\,{z}^{27}+21
\,{z}^{26}+27\,{z}^{25}+38\,{z}^{24}+47\,{z}^{23}+\\+54\,{z}^{22}+62\,{z}
^{21}+68\,{z}^{20}+70\,{z}^{19}+74\,{z}^{18}+74\,{z}^{17}+70\,{z}^{16}
+68\,{z}^{15}+62\,{z}^{14}+54\,{z}^{13}+\\+47\,{z}^{12}+38\,{z}^{11}+27\,
{z}^{10}+21\,{z}^{9}+13\,{z}^{8}+8\,{z}^{7}+4\,{z}^{6}+2\,{z}^{5}+{z}^
{2}+1,
\end{array}
$$

$$
\mathcal{PI}_{4,4}(z)=\frac{pi_{4,4}(z)}{ \left( 1-{z}^{2} \right)  \left( 1-{z}^{3} \right) ^{3} \left( 1-{z}^
{4} \right)  \left( 1-{z}^{6} \right)  \left( 1-{z}^{8} \right) 
}
$$
$$
\begin{array}{l}
pi_{4,4}(z)={z}^{19}+2\,{z}^{17}+{z}^{16}+3\,{z}^{15}+2\,{z}^{14}+4\,{z}^{13}+3\,{
z}^{12}+4\,{z}^{11}+4\,{z}^{10}+4\,{z}^{9}+4\,{z}^{8}+3\,{z}^{7}+\\+4\,{z
}^{6}+2\,{z}^{5}+3\,{z}^{4}+{z}^{3}+2\,{z}^{2}+1,
\end{array}
$$
$$
\mathcal{PI}_{5,5}(z)=\frac{pi_{5,5}(z)}{ \left( 1-{z}^{2} \right)  \left( 1-{z}^{4} \right) ^{2} \left( 1-{z}^
{6} \right) ^{2} \left( 1-{z}^{8} \right) ^{3} \left( 1-{z}^{12}
 \right)
}
$$
$$
\begin{array}{l}
pi_{5,5}(z)={z}^{46}+4\,{z}^{42}+5\,{z}^{40}+44\,{z}^{38}+74\,{z}^{36}+188\,{z}^{
34}+259\,{z}^{32}+452\,{z}^{30}+575\,{z}^{28}+\\+723\,{z}^{26}+773\,{z}^{
24}+773\,{z}^{22}+723\,{z}^{20}+575\,{z}^{18}+452\,{z}^{16}+259\,{z}^{
14}+188\,{z}^{12}+74\,{z}^{10}+\\+44\,{z}^{8}+5\,{z}^{6}+4\,{z}^{4}+1,
\end{array}
$$

$$
\mathcal{PC}_{1,1}(z)=\frac{1}{(1-z)^2(1-z^2)}, \mathcal{PC}_{1,2}(z)={\displaystyle \frac {z^{2} + 1}{(1 - z)^{2}\,(1
 - z^{2})\,(1 - z^{3})}} ,
$$

$$
 \mathcal{PC}_{1,3}(z)={\displaystyle \frac {z^{6} + z^{4} + 3\,z^{3} + 
z^{2} + 1}{( 1- z )^{2}\,(1 - z^{2})\,( 1- z^{4} )^{2}}},  \mathcal{PC}_{1,4}(z)={\displaystyle \frac {z^{8} + 2\,z^{6} + 2\,z^{5}
 + 4\,z^{4} + 2\,z^{3} + 2\,z^{2} + 1}{(1 - z^{5})\,(1 - z)^{2}\,
(1 - z^{3})^{2}\,(1 - z^{2})}} 
$$
$$
\mathcal{PC}_{1,5}(z)=\frac{pc_{1,5}(z)}{ \left( 1-z \right) ^{2} \left( 1-{z}^{4} \right) ^{2}  \left( 1-{z}^{6} \right) ^{2} \left( 1-{z}^{8
} \right) 
 },
$$
$$
\begin{array}{l}
pc_{1,5}(z)={z}^{22}+3\,{z}^{20}+6\,{z}^{19}+10\,{z}^{18}+18\,{z}^{17}+24\,{z}^{16
}+34\,{z}^{15}+43\,{z}^{14}+44\,{z}^{13}+57\,{z}^{12}+ \\+53\,{z}^{11}+57
\,{z}^{10}+44\,{z}^{9}+43\,{z}^{8}+34\,{z}^{7}+24\,{z}^{6}+18\,{z}^{5}
+10\,{z}^{4}+6\,{z}^{3}+3\,{z}^{2}+1,
\end{array}
$$
$$
\mathcal{PC}_{2,2}(z)=\frac{1+z^2}{(1-z)^2(1-z^2)^3},\mathcal{PC}_{2,3} :={\frac {{z}^{9}+3\,{z}^{7}+3\,{z}^{6}+4\,{z}^{5}+4\,{z}^{4}+3\,{z}^{3}
+3\,{z}^{2}+1}{\left( 1-z \right) ^{2}\left( 1-z^2 \right)  
 \left(1-{z}^{3}\right)  \left(1-{z}^{4} \right) \left( 1-{z}^{5} \right) }},
$$
$$
\mathcal{PC}_{2,4}(z)={\frac {{z}^{6}+2\,{z}^{4}+4\,{z}^{3}+2\,{z}^{2}+1}{ \left( 1-z
 \right) ^{2} \left( -{z}^{2}+1 \right) ^{3} \left( -{z}^{3}+1
 \right) ^{2}}},
$$
$$
\mathcal{PC}_{2,5}=\frac{pc_{2,5}}{\left( 1-z \right) ^{2} \left( -{z}^{3}+1 \right)  \left( 1-{z}^{4}
 \right)  \left( 1-{z}^{5} \right)  \left( 1-{z}^{6} \right)  \left( 1
-{z}^{7} \right)  \left( 1-{z}^{8} \right)
 },
$$
$$
\begin{array}{l}
pc_{2,5}(z)={z}^{26}+5\,{z}^{24}+8\,{z}^{23}+19\,{z}^{22}+31\,{z}^{21}+52\,{z}^{20
}+76\,{z}^{19}+104\,{z}^{18}+135\,{z}^{17}+162\,{z}^{16}+\\+188\,{z}^{15}
+200\,{z}^{14}+209\,{z}^{13}+200\,{z}^{12}+188\,{z}^{11}+162\,{z}^{10}
+135\,{z}^{9}+104\,{z}^{8}+76\,{z}^{7}+52\,{z}^{6}+\\+31\,{z}^{5}+19\,{z}
^{4}+8\,{z}^{3}+5\,{z}^{2}+1,

\end{array}
$$
$$
\mathcal{PC}_{3,3}:={\frac {{z}^{10}+3\,{z}^{8}+6\,{z}^{7}+6\,{z}^{6}+6\,{z}^{5}+6\,{z}^{4
}+6\,{z}^{3}+3\,{z}^{2}+1}{ \left( 1-z \right) ^{2} \left(1-{z}^{2}
 \right) ^{2} \left( 1-{z}^{4} \right) ^{3}}},
$$
$$
\mathcal{PC}_{3,4}:= \frac{pc_{3,4}}{ \left( 1-z \right) ^{2} \left(1 -{z}^{3}\right)  \left( 1-{z}^{4}
 \right) ^{2} \left( 1-{z}^{5} \right)  \left( 1-{z}^{6} \right) 
 \left( 1-{z}^{7} \right)
 },
$$
$$
\begin{array}{l}
pc_{3,4}(z)={z}^{22}+6\,{z}^{20}+8\,{z}^{19}+21\,{z}^{18}+34\,{z}^{17}+52\,{z}^{16
}+76\,{z}^{15}+95\,{z}^{14}+117\,{z}^{13}+127\,{z}^{12}+\\+134\,{z}^{11}+
127\,{z}^{10}+117\,{z}^{9}+95\,{z}^{8}+76\,{z}^{7}+52\,{z}^{6}+34\,{z}
^{5}+21\,{z}^{4}+8\,{z}^{3}+6\,{z}^{2}+1
\end{array}
$$
$$
\mathcal{PC}_{4,4}:= \frac{pc_{4,4}}{\left( 1-z \right) ^{2} \left( 1-{z}^{2} \right)  \left( 1-{z}^{3}
 \right) ^{2} \left( 1-{z}^{4} \right) ^{2} \left( 1-{z}^{6} \right) 
 \left( 1-{z}^{8} \right)
 },
$$
$$
\begin{array}{l}
pc_{4,4}(z)={z}^{22}+7\,{z}^{20}+10\,{z}^{19}+24\,{z}^{18}+42\,{z}^{17}+62\,{z}^{
16}+88\,{z}^{15}+113\,{z}^{14}+134\,{z}^{13}+145\,{z}^{12}+\\+156\,{z}^{
11}+145\,{z}^{10}+134\,{z}^{9}+113\,{z}^{8}+88\,{z}^{7}+62\,{z}^{6}+42
\,{z}^{5}+24\,{z}^{4}+10\,{z}^{3}+7\,{z}^{2}+1
\end{array}
$$
$$
\mathcal{PC}_{4,5}:= \frac{pc_{4,5}}{\left( 1-z \right) ^{2} \left( 1-{z}^{3} \right)  \left( 1-{z}^{4}
 \right) ^{2} \left( 1-{z}^{5} \right)  \left( 1-{z}^{6} \right) 
 \left( 1-{z}^{7} \right)  \left( 1-{z}^{8} \right)  \left( 1-{z}^{9} \right)
 },
$$
$$
\begin{array}{l}
pc_{4,5}(z)={z}^{37}+8\,{z}^{35}+15\,{z}^{34}+45\,{z}^{33}+93\,{z}^{32}+181\,{z}^{
31}+324\,{z}^{30}+531\,{z}^{29}+828\,{z}^{28}+\\+1202\,{z}^{27}+1674\,{z}
^{26}+2206\,{z}^{25}+2789\,{z}^{24}+3377\,{z}^{23}+3929\,{z}^{22}+4392
\,{z}^{21}+4734\,{z}^{20}+\\+4909\,{z}^{19}+4909\,{z}^{18}+4734\,{z}^{17}
+4392\,{z}^{16}+3929\,{z}^{15}+3377\,{z}^{14}+2789\,{z}^{13}+2206\,{z}
^{12}+\\+1674\,{z}^{11}+1202\,{z}^{10}+828\,{z}^{9}+531\,{z}^{8}+324\,{z}
^{7}+181\,{z}^{6}+93\,{z}^{5}+45\,{z}^{4}+15\,{z}^{3}+8\,{z}^{2}+1
\end{array}
$$
$$
\mathcal{PC}_{5,5}:= \frac{pc_{5,5}}{ \left( 1-z \right) ^{2} \left( 1-{z}^{2} \right)  \left( 1-{z}^{4}
 \right) ^{2} \left( 1-{z}^{6} \right) ^{3} \left( 1-{z}^{8} \right) ^
{3}
 },
$$
$$
\begin{array}{l}
pc_{5,5}(z)={z}^{42}+8\,{z}^{40}+20\,{z}^{39}+56\,{z}^{38}+126\,{z}^{37}+257\,{z}^
{36}+506\,{z}^{35}+891\,{z}^{34}+1438\,{z}^{33}+\\+2332\,{z}^{32}+3380\,{
z}^{31}+4939\,{z}^{30}+6488\,{z}^{29}+8707\,{z}^{28}+10720\,{z}^{27}+
13175\,{z}^{26}+15010\,{z}^{25}+\\+17283\,{z}^{24}+18414\,{z}^{23}+19791
\,{z}^{22}+19578\,{z}^{21}+19791\,{z}^{20}+18414\,{z}^{19}+17283\,{z}^
{18}+\\+15010\,{z}^{17}+13175\,{z}^{16}+10720\,{z}^{15}+8707\,{z}^{14}+
6488\,{z}^{13}+4939\,{z}^{12}+3380\,{z}^{11}+2332\,{z}^{10}+\\+1438\,{z}^
{9}+891\,{z}^{8}+506\,{z}^{7}+257\,{z}^{6}+126\,{z}^{5}+56\,{z}^{4}+20
\,{z}^{3}+8\,{z}^{2}+1
\end{array}
$$

\end{document}